\title[Basins of Attraction of Automorphisms in $\CC^3$]{Basins of Attraction of Automorphisms  in $\CC^3$}
\author{Liz Vivas and Berit Stensones}
\date{October 5th, 2011}
\newtheorem*{theorem}{Theorem}
\newtheorem{lemma}{Lemma}
\theoremstyle{definition}
\theoremstyle{remark}
\newtheorem{remark}{Remark}
\newcommand{\CC}{\mathbb{C}}
\newcommand{\PP}{\mathbb{P}}
\def\Id{{\mathrm{Id}}}
\begin{document}

\bibliographystyle{plain}

\begin{abstract}
In this paper we shall give examples of maps and automorphisms with regions of attraction that are not simply connected.
\end{abstract}

\maketitle

\section{introduction}

The objects that we shall study are holomorphic maps $F: \CC^k \to \CC^k$ with a fixed point. For simplicity we may assume that $F(O) = O$. An open set $\Omega = \{z \in \CC^k, F^n(z) \to O,\textrm{ when }n \to \infty\}$ is called a \textit{basin of attraction}.

If $k=1$, it is known that if $F$ does have a basin of attraction $\Omega$, then $\Omega$ is a disjoint union of simply connected regions in $\CC$ \cite{Mi}.

We shall show that this is no longer the case when $k\geq 2$.

It is easy to show that if $O$ is an interior point for $\Omega$, then $\Omega$ will have to be one simply connected region so we have to look for our examples among the maps where $O$ is in the boundary of $\Omega$.

A class of maps that have been studied extensively are the ones where $DF(O) = \textrm{Id}$, these are said to be \textit{tangent to the identity}.

The main theorem of this paper is the following:

\begin{theorem} For any $k \geq 3$, there exist $F$ automorphism of $\CC^k$ tangent to the identity at $O$ and its basin $\Omega$ is biholomorphic to $(\CC^*)^{k-2}\times \CC^2$.
\end{theorem}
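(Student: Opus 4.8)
The plan is to build an explicit automorphism $F$ of $\CC^k$, tangent to the identity at $O$, for which the basin $\Omega$ is the complement of $k-2$ invariant hypersurfaces through $O$ along which the orbits are \emph{not} attracted to $O$; since the complement of $k-2$ coordinate hyperplanes in $\CC^k$ is $(\CC^*)^{k-2}\times\CC^2$, this gives the statement. Concretely, for $k=3$ I would look for $F$ as a composition of (over)shears in coordinates $(z_1,z_2,z_3)$ such that: (i) $\{z_1=0\}$ is $F$-invariant and $F$ restricted to it does not converge to $O$ — for instance it is the identity there, so those points are fixed; (ii) $DF(O)=\Id$; and (iii) every point of $\{z_1\neq0\}$ is attracted to $O$. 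Granting (i)--(iii), $\Omega=\{z_1\neq0\}\cong\CC^*\times\CC^2$, and for general $k$ one replaces $\{z_1=0\}$ by $\{z_1\cdots z_{k-2}=0\}$. The design principle is that the ``special'' coordinate $z_1$ cannot by itself be attracting on all of $\CC^*$: because $F$ is tangent to the identity, the $z_1$-dynamics behaves only parabolically in that direction, so its immediate basin is just a petal. One therefore lets the two ordinary coordinates $z_2,z_3$ — which are themselves dragged toward $0$ by the presence of a nonzero $z_1$ (through shears $z_2\mapsto z_2+z_1b(z)$, $z_3\mapsto z_3+z_1c(z)$ with $b(O)=c(O)=0$) — \emph{steer} the $z_1$-dynamics, so that the effective attracting region rotates and eventually captures an arbitrary orbit in $\{z_1\neq0\}$, after which it contracts to $O$.

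The heart of the proof, and the step I expect to be the main obstacle, is verifying (iii): that the coupled orbit $\bigl(z_1^{(n)},z_2^{(n)},z_3^{(n)}\bigr)$ really does converge to $O$ for \emph{every} starting point with $z_1\neq0$, no matter how large $|z_1|$ is or where the phases point initially. This is delicate because all the relevant rates are parabolic, hence borderline: the capture of the $z_1$-orbit by the moving attracting region must occur before $z_2,z_3$ have shrunk so far that they can no longer steer, and the subsequent contraction of $z_1$ must not be spoiled by the residual drift in $z_2,z_3$. I would control this with Fatou-coordinate normal forms along the characteristic direction involved, reducing the problem to a translation in one variable and (quasi-)contractions in the others, together with explicit asymptotics of the form $|z_j^{(n)}|\asymp n^{-\a_j}$ for the coupled orbit; balancing these exponents is where the real work lies, and it is the need for two ordinary coordinates alongside the special ones that forces $k\geq 3$. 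If a sufficiently sharp explicit $F$ proves hard to write down, the fallback is to produce $F$ by Andersén--Lempert-type approximation of a prescribed model dynamics having the behaviour above.

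It then remains to check that $\Omega$ is \emph{exactly} the basin and to identify it. For the first point one must rule out extra attracted orbits: any orbit not eventually inside the local attracting region either escapes to infinity or is absorbed by one of the invariant hypersurfaces $\{z_i=0\}$ and hence does not converge to $O$; establishing this global dichotomy is itself part of the argument. For the identification, if $\Omega=\{z_1\cdots z_{k-2}\neq0\}$ holds on the nose then $\Omega\cong(\CC^*)^{k-2}\times\CC^2$ is immediate; otherwise one writes $\Omega=\bigcup_{n\geq0}F^{-n}(W)$ for a local attracting domain $W$ at $O$ on which, via the Fatou coordinates, $F$ is conjugate to a product of a translation and contractions acting on $\mathbb{H}\times(\Delta^*)^{k-2}\times\Delta$ — the $k-2$ punctures coming precisely from the invariant hypersurfaces through $O$ that are not attracted to $O$ — and then the standard description of such increasing unions identifies $\Omega$ with $\CC\times(\CC^*)^{k-2}\times\CC\cong(\CC^*)^{k-2}\times\CC^2$. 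Since $O\in\partial\Omega$ while $\Omega$ is not simply connected, this is consistent with, and relies on, the observation recorded above that $O$ cannot be an interior point of such a basin.
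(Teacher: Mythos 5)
Your plan hinges on making the basin \emph{literally} equal to the complement of the invariant hypersurfaces, $\Omega=\{z_1\cdots z_{k-2}\neq0\}$, and you correctly flag step (iii) --- that every point with $z_1\neq0$ is attracted --- as the main obstacle; but you never overcome it, and it is not merely delicate, it is the wrong target. For a map tangent to the identity, orbits converging to $O$ do so tangentially to characteristic directions and the attracted set is a petal-type region with $O$ on its boundary, not a dense co-analytic subset of $\CC^k$. As one concrete obstruction: if $\{z_1=0\}$ is pointwise fixed and $q\neq O$ is a point of it at which the transverse multiplier of $DF(q)$ has modulus less than $1$, the local stable manifold of $q$ is a curve transverse to the hyperplane whose points (off the hyperplane) converge to $q$ rather than to $O$; your sketch gives no mechanism for avoiding this at every fixed point of the hypersurface while keeping $F$ an automorphism. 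Your fallback --- $\Omega=\bigcup_nF^{-n}(W)$ with $W$ modelled on $\mathbb{H}\times(\Delta^*)^{k-2}\times\Delta$ and a ``standard description of such increasing unions'' --- is closer to the truth, but no such standard theorem exists; identifying that union is exactly what must be proved, and the product model itself is unjustified since the coordinates $z_1,\dots,z_{k-2}$ are not separately normalized or contracted.

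The missing idea, which is the heart of the paper's argument, is a reduction of dimension by a semiconjugacy rather than a deletion of hyperplanes. One builds $F$ by shears and overshears so that (for $k=3$, coordinates $(z,t,w)$, $\zeta=zt$) $F_1=z(1-a\zeta+\cdots)$, $F_2=t(1-b\zeta+\cdots)$, $F_3=w(1-c\zeta+\cdots)+O(\zeta^3)$; then $\pi(z,t,w)=(zt,w)$ semiconjugates $F$ to a map $G(\zeta,w)=(\zeta-2a\zeta^2+\cdots,\ w-c\zeta w+\cdots)$ of $\CC^2$. Although every characteristic direction of $F$ itself is degenerate, $G$ has the non-degenerate characteristic direction $(1,0)$ with director $(c-2a)/2a>0$, so by Hakim--Weickert its basin $\tilde\Omega$ is biholomorphic to $\CC^2$. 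One then shows $\Omega=\pi^{-1}(\tilde\Omega)$ (the inclusion $\pi^{-1}(\tilde\Omega)\subset\Omega$ uses the symmetric choice $a=b$, so that the orbit of $(\sqrt{x},\sqrt{x},y)$ keeps its first two coordinates equal), and since $\tilde\Omega$ avoids $\{\zeta=0\}$, the map $(z,t,w)\mapsto(z,\phi(zt,w))$, with $\phi:\tilde\Omega\to\CC^2$ a biholomorphism, exhibits $\Omega$ as $\CC^*\times\CC^2$. The $\CC^*$ factor is the fiber of $\pi$, not a deleted hyperplane, and $\Omega$ is a proper subdomain of $\{zt\neq0\}$. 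Without this (or an equivalent) reduction to a non-degenerate two-dimensional problem, the coupled-asymptotics program in your step (iii) has no starting point.
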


\section{Definitions and motivations}

We shall use a very famous example from complex dynamics in one variable as an inspiration for our constructions. Let $f(z) = z + az^2 = z (1+az)$ where $a$ is real and non-zero, then $f(0) = 0$ and $f'(0) = 1$.

The basins of attractions $C_{a}$ for these maps are simply connected, bounded and $0 \in \partial C_a$. The map $z \to z(1+\frac{1}{2}z)$ behaves in a similar way. 
In $\CC^2$ we can define the map $F(z,w) = \left(z+\frac{1}{2}z^2w, w + \frac{1}{2}zw^2\right) = (z,w)\left(1+\frac{1}{2}zw\right)$. If we look at how this map acts on $zw$, we observe that $zw \to zw(1+\frac{1}{2}zw)^2$. From this we see that if $(z_n,w_n) = F^n(z,w)$, then $\prod_{j=0}^n\left(1+\frac{1}{2}z_nw_n\right)^2 \to O$ as $n \to \infty$, if $zw \in \tilde{C}$, where $\tilde{C}$ is a region very similar to the domains $C_a$. If the product $zw \notin \tilde{C}$, then $z_nw_n$ does not go to $0$. Hence the basin of attraction for $F$ is the region
$\{(z,w)\in\CC^2, zw \in \tilde{C}\}$. Since the map is equal to the identity on the axes, if follows that the map $(z,w) \to (z,zw)$ is one to one in $\Omega$, so $\Omega$ is biholomorphic to $\CC^* \times \tilde{C}$.

The map $F$ is not an automorphism and we have not been able to find an automorphism of $\CC^2$ with a similar behavior. 

The situation changes if we allow $k$ to be larger than or equal to $3$. We will give the construction in the case $k=3$ and indicate how it can be done for larger $k$'s.

\begin{lemma}
For any given $a,b,c$ different than $0$, there exists $F = (F_1,F_2,F_3): \CC^3 \to \CC^3$ automorphism tangent to the identity of the form:
\begin{align}
F_1(z,t,w) &= z[1 - a\zeta + O(\zeta^2,\zeta w)] \nonumber \\
F_2(z,t,w) &= t[1 - b\zeta + O(\zeta^2,\zeta w)]   \label{formF}\\
F_3(z,t,w) &= w[1 - c\zeta + O(\zeta^2,\zeta w)] + O(\zeta^3) \nonumber;
\end{align}
where $\zeta = zt$. 
\end{lemma}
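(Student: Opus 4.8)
The plan is to produce $F$ as an explicit finite composition of elementary automorphisms of $\CC^3$. Recall that a \emph{shear} $(z,t,w)\mapsto(z,t,w+P(z,t))$ and an \emph{overshear} of the form $(z,t,w)\mapsto(z\,e^{Q(t,w)},t,w)$, together with the analogous maps obtained by permuting the roles of the three coordinates, are automorphisms of $\CC^3$, and that such a map is tangent to the identity at $O$ provided $P$ vanishes to order $\geq 2$ and $Q$ vanishes at $O$. Hence any finite composition of such maps is an automorphism of $\CC^3$ tangent to the identity, and the whole problem reduces to choosing the factors so that the composition has the normal form \eqref{formF}.

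The obstruction to overcome is that no single shear or overshear can multiply a coordinate by a function of that same coordinate, whereas \eqref{formF} asks essentially for $z\mapsto z(1-a\zeta+\cdots)$ with $\zeta=zt$. The device I would use is to employ $w$ as a scratch variable. First apply the shear $w\mapsto w+zt$, recording $\zeta$ in the third slot; then the two overshears $z\mapsto z\,e^{-aw}$ and $t\mapsto t\,e^{-bw}$; then restore the third slot by the shear $w\mapsto w-zt$ performed \emph{in the current coordinates} (i.e.\ subtract the current product of the first two coordinates); and finally the compensating overshears $z\mapsto z\,e^{aw}$ and $t\mapsto t\,e^{bw}$. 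In the first coordinate the exponentials telescope: the $e^{\pm aw}$ factors cancel and what survives is $z\,e^{-a\zeta u}$ with $u$ a nowhere-vanishing holomorphic function, which is precisely $z[1-a\zeta+O(\zeta^2,\zeta w)]$; likewise for the second coordinate. Performing the restoration in the \emph{updated} coordinates is exactly what forces the leftover exponent to be a multiple of $\zeta$, so that the forbidden pure powers of $w$ drop out of $F_1$ and $F_2$.

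After these six moves the first two coordinates already have the desired shape, and a short computation shows that $F_3=w+(a+b)\zeta w+(a+b)\zeta^2+(\text{terms of the type allowed by }\eqref{formF})$. Two further moves, neither of which touches $F_1$ or $F_2$, then repair the third coordinate: the overshear $w\mapsto w\,e^{-(a+b+c)zt}$ turns the coefficient of $\zeta w$ into $-c$ while creating only admissible terms, and the shear $w\mapsto w-(a+b)(zt)^2$ kills the spurious $\zeta^2$ term, leaving an error that is $O(\zeta^3)$. Expanding the resulting composition of eight elementary automorphisms in powers of $(z,t,w)$ then confirms \eqref{formF}, with leading coefficients $-a,-b,-c$ as required.

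The only genuine work is this bookkeeping. One must track which monomials are permitted in each component ($F_1$ and $F_2$ admit a correction to $z$, resp.\ $t$, only if it carries a factor $\zeta$; in $F_3$ the coefficient of $\zeta w$ is pinned to $-c$ and no $\zeta$- or $\zeta^2$-term free of $w$ may appear) and verify that the inadmissible monomials produced along the way are exactly those removed by the telescoping and by the two corrective moves. There is no analytic subtlety, since each factor is visibly a global automorphism; the main obstacle is simply to arrange the finite list of shears so that this cancellation of forbidden terms takes place. The same scheme adapts to $k>3$ by keeping $\zeta=z_1z_2$ and adjoining, for each further coordinate, a compensating pair of overshears together with the analogous corrective terms.
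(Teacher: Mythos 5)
Your proposal is correct and is essentially the paper's own proof: the paper also realizes $F$ as the commutator $\phi_2^{-1}\circ\phi_1^{-1}\circ\phi_2\circ\phi_1$ of the shear $\phi_1(z,t,w)=(z,t,w-zt)$ with the overshears $\phi_2(z,t,w)=(ze^{aw},te^{bw},w)$, which produces exactly your telescoping $z\,e^{-a\zeta u}$, $t\,e^{-b\zeta u}$, followed by the corrective map $\phi_3(z,t,w)=(z,t,we^{-(a+b+c)zt}+(a+b)z^2t^2)$ on the third coordinate. The only differences are sign conventions (your mirror-image choice of $w\mapsto w+zt$ and $e^{\mp aw}$) and that the paper packages your last two moves into the single automorphism $\phi_3$.
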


\begin{proof}
We construct $F$ as a finite composition of shears and overshears. We use the following shears and overshears:
\begin{align*}
\phi_1(z,t,w) &= (z,t,w-zt),\\
\phi_2(z,t,w) &= (ze^{aw},te^{bw},w),\\
\phi_3(z,t,w) &= (z,t,we^{-(a+b+c)zt}+(a+b)z^2t^2).
\end{align*}
Then we define $F$ as follows:
\begin{align*}
F = \phi_3 \circ \phi_2^{-1} \circ \phi_1^{-1}\circ \phi_2 \circ \phi_1. 
\end{align*}
An easy computation shows that $F$ has the form  \eqref{formF}, as above.
\end{proof}

We choose and fix from now on $a,b,c$ positive and real, $a=b$ and $c > 2a$.  Note that with this choice $tF_1(z,t,w) = zF_2(z,t,w)$.
Also, notice that $F(z,0,w) = (z,0,w)$ and $F(0,t,w) = (0,t,w)$.

It remains to prove that $F$ does have a basin of attraction that is biholomorphic to $\CC^*\times\CC^2$.

\section{Proof of the theorem}

To prove the main theorem we shall use the ideas developed by Hakim \cite{hak}. Given a map $F:\CC^k\to\CC^k$ that fixes the origin and it is tangent to the identity at $O$, then we can write:
$$
F(z) = z + P_r(z) + O(|z|^{r+1}).
$$
where $P_r\neq 0$ the $r$-th homogeneous term in the homogeneous expansion of $F$.

We say that $v\neq O$ is a \textit{characteristic direction} if $P_r(v) = \lambda v$. If $\lambda \neq 0$ then we say $v$ is \textit{non-degenerate} and we say $v$ is \textit{degenerate} if $\lambda = 0$. We identify $P_k$ with its induced map in $\PP^{k-1}$ as well as $v$ with its projection $[v]$ in $\PP^{k-1}$.

We say that an orbit $\{F^n(p)\}$ converges to the origin tangentially to a direction $v$ in $\CC^n$ if $F^n(p) \to O$ and $[F^n(p)] \to [v]$ in $\PP^{k-1}$, where $[\cdot]:\CC^k \backslash\{O\} \to \PP^{k-1}$ denotes the canonical projection. Characteristic directions are relevant in the study of orbits going to the origin due to the following fact \cite{hak}: If there is an orbit of $F$ converging to the origin tangentially to a direction $v \in \CC^k$ then $v$ is a characteristic direction of $F$.

To any non-degenerate characteristic direction we can associate a set of eigenvalues $\alpha_1,\ldots,\alpha_{k-1} \in \CC$ of the linear operator $D(P_r)([v])-\Id: T_{[v]}\PP^{n-1}\to\PP^{n-1}$. These are called the \textit{directors} of $[v]$. Hakim (see also Weickert \cite{we}) proved the following: if $F$ is a biholomorphism, $[v]$ a non-degenerate characteristic direction and the directors of $[v]$ have all positive real part, then $\Omega_{[v]}$ is biholomorphic to $\CC^k$. 

For $F$ as above, in lemma 1, we have that the characteristic directions are $(z,0,w)$ and $(0,t,w)$. All the directions are degenerate.

Define the map $\pi: \CC^3 \to \CC^2$ as follows $\pi(z,t,w) = (zt,w)$. Then the following diagram commutes:

\[ 
\begin{CD} 
\CC^3 @>F>> \CC^3\\ 
@V\pi VV  @V\pi VV   \\ 
\CC^2 @>G>> \CC^2
\end{CD} \] 
where $G$ is of the form:
\begin{align}
G(\zeta,w) = (\zeta - 2a\zeta^2 + O(\zeta^3,\zeta^2w), w - c\zeta w + O(\zeta^3, \zeta^2w, \zeta w^3)),
\end{align}
for $\zeta = zw$.

The basin of attraction for $F$ and $G$ are $\Omega=  \{ p \in \CC^3 \backslash O, F^n(p) \to O\}$ and $\tilde\Omega= \{ q \in \CC^2 \backslash O, G^n(q) \to O\}$ respectively.

We see the following for $\Omega$: if $(z_o,t_o,w_o) \in \Omega$, then $z_ow_o \neq 0$. Also, for any other $(z,t_o,w)$ such that $zw = z_ow_o$ we will have that $(z,t_o,w)$ is also in $\Omega$.

The map $G$ has the non-degenerate characteristic direction $(1,0)$. We can compute its director and we obtain $\frac{c-2a}{2a}$. With our choice of $a$ and $c$ we obtain that the director is positive, therefore $\tilde{\Omega}$ is biholomorphic to $\CC^2$.

\begin{lemma} $\pi(\Omega) = \tilde{\Omega}$. 
\end{lemma}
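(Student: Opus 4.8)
\emph{Proof plan.} I would prove the two inclusions separately. The inclusion $\pi(\Omega)\subseteq\tilde\Omega$ should be immediate: iterating the commuting square gives $\pi\circ F^n=G^n\circ\pi$, so if $p\in\Omega$ then $G^n(\pi(p))=\pi(F^n(p))\to\pi(O)=O$ by continuity of $\pi$, and $\pi(p)\ne O$ because any $p=(z_o,t_o,w_o)\in\Omega$ must satisfy $z_o\ne 0$ and $t_o\ne 0$ (otherwise $F$ fixes $p$, since $F$ is the identity on $\{z=0\}$ and on $\{t=0\}$, so its orbit cannot tend to $O$). Hence $\pi(p)\in\tilde\Omega$.

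For the reverse inclusion $\tilde\Omega\subseteq\pi(\Omega)$ I would argue as follows. Fix $(\zeta_0,w_0)\in\tilde\Omega$ and first note $\zeta_0\ne 0$: applying the square to $(z,0,w)$ shows $G(0,w)=(0,w)$, so a point with first coordinate $0$ is $G$-fixed and can lie in $\tilde\Omega$ only if it is $O$, which it is not. Then I would choose $s$ with $s^2=\zeta_0$ (so $s\ne 0$) and take the specific preimage $p:=(s,s,w_0)$, which satisfies $\pi(p)=(\zeta_0,w_0)$. It remains to show $p\in\Omega$, i.e.\ that $F^n(p)\to O$. Writing $F^n(p)=(z_n,t_n,w_n)$, the square gives $(z_nt_n,w_n)=G^n(\zeta_0,w_0)\to O$, so $w_n\to 0$ and $z_nt_n\to 0$.

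The main obstacle is that $z_nt_n\to 0$ does not by itself force $z_n\to 0$ and $t_n\to 0$ — one coordinate could blow up while the other decays. This is exactly where the normalization $a=b$ is meant to be used: it makes $tF_1\equiv zF_2$ an exact identity, so restricting it to the diagonal $\Delta=\{z=t\}$ gives $F_1=F_2$ on $\Delta$, hence $F(\Delta)\subseteq\Delta$. Since $p\in\Delta$, we get $z_n=t_n$ for all $n$, so $z_n^2=z_nt_n\to 0$, whence $z_n,t_n\to 0$; together with $w_n\to 0$ this yields $F^n(p)\to O$, so $p\in\Omega$ and $(\zeta_0,w_0)=\pi(p)\in\pi(\Omega)$. (Equivalently, one can use that the ratio $t_n/z_n$ is a first integral of the dynamics: $F$ preserves $\{z\ne0,\,t\ne0\}$, and there $t_{n+1}/z_{n+1}=F_2/F_1=t_n/z_n$, so any preimage $(z_0,t_0,w_0)$ with $z_0t_0=\zeta_0$ works.) Apart from verifying this invariance and the bookkeeping point $\zeta_0\ne 0$, the argument is routine.
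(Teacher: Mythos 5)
Your proposal is correct and follows essentially the same route as the paper: the forward inclusion via the semiconjugacy $\pi\circ F^n=G^n\circ\pi$, and the reverse inclusion by lifting $(\zeta_0,w_0)$ to the diagonal point $(\sqrt{\zeta_0},\sqrt{\zeta_0},w_0)$ and using the exact identity $tF_1=zF_2$ (from $a=b$) to keep $z_n=t_n$, so that $z_n^2=z_nt_n\to 0$. Your write-up only adds explicit justification for details the paper states without comment (the invariance of the diagonal and the non-vanishing of $\zeta_0$).
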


\begin{proof}

Let $p \in \Omega$, then for $q = \pi(p)$ we have $G^n(q) = \pi(F^n(p))\to \pi(O) = O$. Then $\pi(\Omega)\subset \tilde{\Omega}$.
Now let $q = (x,y) \in \tilde{\Omega}$, we claim that $p=(\sqrt{x},\sqrt{x},y)$ is in $\Omega$ i.e. $F^n(\sqrt{x},\sqrt{x},y) \to O$. 
Note that $x \neq 0$, since every point of the form $(0,y)$ is fixed by $G$ and therefore does not converge to the origin. Let $(z_n,t_n,w_n) = F^n(\sqrt{x},\sqrt{x},y)$, then $z_n = t_n$. Since $G^n(q) = \pi(F^n(p)) \to O$, we have $z_nt_n \to 0$ and $w_n \to 0$. Therefore $z_n = t_n \to 0$ and $w_n \to 0$. We conclude $p \in \Omega$.

\end{proof}
%

Now we are ready to prove that $\Omega$ is biholomorphic to $\CC^\star \times \CC^2$.

\begin{proof}[Proof of Theorem]
Hakim's theorem say that $\tilde\Omega$ is biholomorphic to $\CC^2$. Let us call $\phi: \tilde\Omega \to \CC^2$ a chosen biholomorphism.
Then define the map:
\begin{align*}
\Psi &: \Omega \to \CC^3\\
\Psi(z,t,w) &= (z,\phi(zt,w)).
\end{align*}
Clearly this map is injective, since $\phi$ is injective and $\Omega$ does not intersect the set $\{zt=0\}$. Therefore $\Psi$ is a biholomorphism between $\Omega$ and its image. It is not hard to see that $\Psi(\Omega) = \CC^* \times \CC^2$.
\end{proof}

\begin{remark}
For the construction of a basin $\Omega$ of an automorphism $F$ in $\CC^{k+1}$ such that is biholomorphic to $(\CC^\star)^{k-1}\times\CC^2$ for $k \geq 3$, we choose a map $F$ as follows $F(z,w) = (F_1,\ldots,F_{k+1})$ for $(z,w) = (z_1,\ldots,z_k,w)$:
\begin{align*}
F_i(z,w) &= z_i(1-a_i\zeta + O(\zeta^2,\zeta w))\textrm{ for }1\leq i \leq k\textrm{ and}\\
F_{k+1}(z,w) &= w-b\zeta w + O(\zeta^3,\zeta^2 w,\zeta w^3)
\end{align*}
where $\zeta = \prod_{i=1}^kz_i$. The same conclusion will follow if we use all $a_i$ equal, real and positive and $b>\sum a_i$. 
\end{remark}

\begin{remark}
In each one of these cases we have that the set of characteristic directions is $k$ dimensional. Following Abate and Tovena's terminology (see \cite{ab-tov}) we can also see that each of these maps is $0$-dicritical. That is, the set of singular directions is $0$-dimensional.
\end{remark}

\end{document}